\documentclass[a4paper, 12pt, final]{amsart}
\usepackage{amscd}
\usepackage{mathrsfs}
\usepackage{}
\usepackage[all]{xy}
\usepackage[OT1]{fontenc}
\usepackage[latin1]{inputenc}
\usepackage{amssymb,latexsym}
\usepackage{type1cm,ae}
\usepackage[notref,notcite]{showkeys}
\usepackage{graphicx}
\usepackage{xspace}
\usepackage{setspace}
\usepackage[english]{babel}

\theoremstyle{definition}
		\newtheorem{theorem}{Theorem}[section]
				
				\newtheorem{lemma}[theorem]{Lemma}

                \newtheorem{example}[theorem]{Example}

	            \newtheorem{remark}[theorem]{Remark}
\numberwithin{equation}{section}

\newcommand*{\bR}{\ensuremath{\mathbb{R}}}

\newcommand*{\bdary}[1]{\partial #1}

\newcommand*{\Wert}{\mathord{\mbox{|\kern-1.5pt|\kern-1.5pt|}}}
\newcommand*{\ie}{\mbox{i.e.}\xspace}

\DeclareMathOperator{\dist}{dist}
\DeclareMathOperator{\diam}{diam}

\DeclareMathOperator{\capacity}{Cap}

\def\Xint#1{\mathchoice
  {\XXint\displaystyle\textstyle{#1}}%
  {\XXint\textstyle\scriptstyle{#1}}%
  {\XXint\scriptstyle\scriptscriptstyle{#1}}%
  {\XXint\scriptscriptstyle\scriptscriptstyle{#1}}%
  \!\int}
\def\XXint#1#2#3{{\setbox0=\hbox{$#1{#2#3}{\int}$}
  \vcenter{\hbox{$#2#3$}}\kern-.5\wd0}}

\def\dashint{\Xint-}

\title[Sharp capacity estimates in s-John domains]{Sharp capacity estimates in s-John domains}
\author{Chang-Yu Guo}
\address[Chang-Yu Guo]{Department of Mathematics and Statistics, University of Jyv\"askyl\"a, P.O. Box 35, FI-40014 University of Jyv\"askyl\"a, Finland}
\email{changyu.c.guo@jyu.fi}

\subjclass[2010]{30C65,46E35}
\keywords{$s$-John domain, Hausdorff $q$-content, $p$-Capacity}
\thanks{C.Y.Guo was partially supported by the Academy of Finland grant 131477 and the Magnus Ehrnrooth foundation.}

\begin{document}

\begin{abstract}
It is well-known that several problems related to analysis on $s$-John domains can be unified by certain capacity lower estimates. In this paper, we obtain general lower bounds of $p$-capacity of a compact set $E$ and the central Whitney cube $Q_0$ in terms of the Hausdorff $q$-content of $E$ in an $s$-John domain $\Omega$. Moreover, we construct several examples to show the essential sharpness of our estimates.
\end{abstract}

\maketitle
\section{Introduction}\label{sec:first}
Recall that a bounded domain $\Omega\subset \bR^n$ is a John domain if there
is a constant $C$ and a point $x_0\in \Omega$ so that, for each $x\in \Omega,$
one can find a rectifiable curve $\gamma:[0,1]\to \Omega$ with $\gamma(0)=x,$
$\gamma(1)=x_0$ and with
\begin{equation} \label{eka}
    Cd(\gamma(t),\bdary\Omega)\ge l(\gamma([0,t]))
\end{equation}
for each $0<t\le 1.$ F. John used this condition in his work on
elasticity~\cite{j61} and the term was coined by
Martio and Sarvas~\cite{ms79}. Smith and Stegenga \cite{ss90} introduced
the more general concept of $s$-John domains, $s\ge 1,$ by replacing \eqref{eka}
with
\begin{equation} \label{toka}
    Cd(\gamma(t),\bdary\Omega)\ge l(\gamma([0,t]))^s.
\end{equation}
The condition~\ref{eka} is called a ``twisted cone condition" in literature. Thus condition~\ref{toka} should be called a ``twisted cusp condition". 

In the last twenty years, $s$-John domains has been extensively studied in connection with Sobolev type inequalities; see~\cite{bk95,hk98,hk00, km00, kot02, ss90}. In particular, Buckley and Koskela~\cite{bk95} have shown that a simply connected planar domain which supports a Sobolev-Poincar\'e inequality is an $s$-John domain for an appropriate $s$. Smith and Stegenga have shown that an $s$-John domain $\Omega$ is a $p$-Poincar\'e domain, provided $s<\frac{n}{n-1}+\frac{p-1}{n}$. In particular, if $s<\frac{n}{n-1}$, then $\Omega$ is a $p$-Poincar\'e domain for all $1\leq p<\infty$. These results were further generalized to the case of $(q,p)$-Poincar\'e domains in~\cite{hk98, km00, kot02}. Recall that a bounded domain $\Omega\subset \bR^n$, $n\geq 2$, is said to be a $(q,p)$-Poincar\'e domain if there exists a constant $C_{q,p}=C_{q,p}(\Omega)$ such that
\begin{equation}\label{def:poincare domain}
\Big(\int_\Omega |u(x)-u_\Omega|^q dx\Big)^{1/q}\leq C_{q,p}\Big(\int_\Omega |\nabla u(x)|^p dx\Big)^{1/p}
\end{equation} 
for all $u\in C^\infty(\Omega)$. Here $u_\Omega=\dashint_{\Omega}u(x)dx$. When $q=p$, $\Omega$ is termed a $p$-Poincar\'e domain and when $q>p$ we say that $\Omega$ supports a Sobolev-Poincar\'e inequality.

The recent studies \cite{aim09,g13b,gkt12} on mappings of finite distortion
have generated new interest in the class of $s$-John domains. In particular, uniform continuity of quasiconformal mappings onto $s$-John domains was studied in~\cite{g13,gk13}. 

The proofs for both types of problems rely on certain capacity estimates for subsets of $s$-John domains. To be more precise, for the problem related to Sobolev-Poincar\'e inequalities, one uses the idea of Maz'ya~\cite{m60,m11} to reduce the problem to capacity estimates of the form 
\begin{align*}
\capacity_p(E,Q_0,\Omega)\geq \psi(|E|),
\end{align*}
where $Q_0$ is the fixed Whitney cube containing the (John) center $x_0$ and  $E$ is an admissible subset of $\Omega$ disjoint from $Q_0$; for~\eqref{def:poincare domain}, $\psi(t)=Ct^{p/q}$, see also~\cite{hak98,kot02}. Here, by admissible we mean that $E$ is an open set so that $\bdary E\cap \Omega$ is a smooth  submanifold. As for the uniform continuity of quasiconformal mappings onto $s$-John domains, one essentially needs a capacity estimate of the form
\begin{align*}
\capacity_n(E,Q_0,\Omega)\geq \psi(\diam E),
\end{align*}
where $E$ is a continuum in $\Omega$ disjoint from the central Whitney cube $Q_0$; see~\cite{g13}. Thus one could expect that a more general capacity estimate of the form 
\begin{align}\label{eq:expected}
\capacity_p(E,Q_0,\Omega)\geq \psi(\mathcal{H}_\infty^q(E))
\end{align} 
holds in certain $s$-John domains $\Omega$,  
where $E$ is a compact set in $\Omega$ disjoint from the central Whitney cube $Q_0$ and $\mathcal{H}_\infty^q(E)$ is the Hausdorff $q$-content of $E$. We confirm this expectation by showing the following result.

\begin{theorem}\label{thm:main theorem}
Let $\Omega\subset\bR^n$, $n\geq 2$, be an $s$-John domain. For $0<\varepsilon<1$, $1\leq p\leq n$ and  $q\geq s(n-1)+1-p+\varepsilon$, there exists a positive constant $C(n,p,q,s,\varepsilon)$ such that
\begin{equation}\label{eq:sharp capacity}
\capacity_p(E,Q_0,\Omega)\geq C(n,p,q,s,\varepsilon)\Big(\mathcal{H}_\infty^q(E)\Big)^{\frac{s(n-1)+1-p+\varepsilon}{q}},
\end{equation}
whenever $E\subset\Omega$ is a compact set disjoint from $Q_0$. 

\end{theorem}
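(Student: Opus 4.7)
The proof follows the classical Whitney-chain method of Hajl\'asz--Koskela and collaborators for Sobolev--Poincar\'e inequalities on $s$-John domains, adapted here to produce a bound in terms of the Hausdorff $q$-content. Let $u \in C^\infty(\Omega)$ be admissible for $\capacity_p(E,Q_0,\Omega)$, so $u = 0$ on $Q_0$ and $u \geq 1$ on $E$. Let $\mathcal{W}$ be a Whitney decomposition of $\Omega$ and set $\mathcal{F} = \{Q \in \mathcal{W} : Q \cap E \neq \emptyset\}$. Since $\{Q\}_{Q \in \mathcal{F}}$ is a cover of $E$ with $\diam(Q) \sim l(Q)$, one has $\mathcal{H}^q_\infty(E) \lesssim \sum_{Q \in \mathcal{F}} l(Q)^q$, so it suffices to bound this sum from above by a power of $\int_\Omega |\nabla u|^p$.

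For each $Q \in \mathcal{F}$, pick $x_Q \in Q \cap E$ and an $s$-John curve $\gamma_Q$ joining $x_Q$ to $x_0$, and extract a Whitney chain $Q = R_0^Q, R_1^Q, \ldots, R_{N_Q}^Q = Q_0$ along $\gamma_Q$. The standard telescoping argument, using the $(1,p)$-Poincar\'e inequality on each link together with the comparability $l(R_j) \sim d(R_j, \partial\Omega)$, yields the pointwise estimate
$$ 1 \;\leq\; |u(x_Q) - u(x_0)| \;\lesssim\; \sum_j l(R_j^Q)^{1 - n/p} \Bigl( \int_{R_j^Q} |\nabla u|^p \Bigr)^{1/p}. $$
The $s$-John condition $d(\gamma_Q(t), \partial\Omega) \gtrsim l(\gamma_Q|_{[0,t]})^s$ controls the scale distribution of the chain: at dyadic scale $2^{-k}$ the chain contains $\lesssim 2^{k(s-1)/s}$ cubes.

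The analytic core is a carefully tuned H\"older decomposition of the chain sum. Inserting a weight $l(R_j^Q)^\alpha$ and splitting with conjugate exponents $p', p$ produces a ``geometric'' factor $(\sum_j l(R_j^Q)^{\alpha p'})^{1/p'}$ and an ``energy'' factor involving $\int_{R_j^Q} |\nabla u|^p$. Choosing $\alpha p'$ slightly above the critical value $(s-1)/s$ imposed by the scale count bounds the geometric factor by a constant of order $1/\varepsilon^{1/p'}$, uniformly in $Q$. Raising to the $p$-th power, multiplying by $l(Q)^q$, summing over $Q \in \mathcal{F}$, and interchanging summation via Fubini recasts the right-hand side as $\sum_{R \in \mathcal{W}} W(R) \int_R |\nabla u|^p$, where $W(R) = \sum_{Q \in \mathcal{F},\, R \in \mathrm{chain}(Q)} l(Q)^q$ is a weighted shadow of $R$. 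A second invocation of the $s$-John geometry bounds $W(R)$ by a suitable power of $l(R)$, and after collecting all exponents one obtains $\bigl(\sum_{Q \in \mathcal{F}} l(Q)^q\bigr)^{s(n-1) + 1 - p + \varepsilon} \lesssim \bigl(\int_\Omega |\nabla u|^p\bigr)^q$, which on inversion is \eqref{eq:sharp capacity}.

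The main obstacle is the delicate bookkeeping at the critical scale $(s-1)/s$: this exponent appears both in the chain-scale count (forced by the $s$-John definition) and at the H\"older summability threshold, requiring an $\varepsilon$-margin that shows up as the ``$+\varepsilon$'' in the statement and produces the blow-up of $C(n,p,q,s,\varepsilon)$ as $\varepsilon \to 0$. A secondary technical difficulty is verifying the shadow bound on $W(R)$ in a generic $s$-John domain rather than merely in the model power cusp---this requires estimating the number and size of Whitney cubes in the ``downstream'' $s$-cusp of a given $R$, which is where the exponent $s(n-1) + 1$ (the effective dimension of the $s$-cusp) emerges.
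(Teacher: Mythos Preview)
Your outline diverges from the paper's proof in two essential ways, and the second is a genuine gap.

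First, the paper does \emph{not} reduce to $\sum_{Q\in\mathcal F} l(Q)^q$ and then run a global Fubini/shadow argument. Instead it splits $E$ into a ``good'' part $\mathscr G=\{x\in E: u_{Q(x)}\le 1/2\}$ and a ``bad'' part $\mathscr B=E\setminus\mathscr G$, and treats them separately. On $\mathscr B$ the chain--H\"older estimate is reorganized as a sum over dyadic annuli $2^{l+1}Q\setminus 2^lQ$ around each Whitney cube $Q$ meeting $\mathscr B$; a pigeonhole against a convergent geometric series then produces, for each $x\in\mathscr B$, a single scale $R_x\ge d(x,\partial\Omega)/2$ with $\int_{\Omega\cap B(x,R_x)}|\nabla u|^p\gtrsim R_x^{\,s(n-1)+1-p+\varepsilon}$. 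Vitali on the family $\{B(x,R_x)\}$ then gives the $\mathcal H^q_\infty$ bound directly, with no shadow estimate needed because the selected balls are pairwise disjoint. On $\mathscr G$ one has $u_{Q(x)}\le 1/2$ but $u(x)\ge 1$, so the oscillation is \emph{local}; a Morrey-type telescoping inside $Q(x)$ (plus another Vitali step) handles this part. Your sketch collapses these two regimes into one pointwise chain bound $|u(x_Q)-u(x_0)|$, which already hides the passage from $u_Q$ to $u(x_Q)$ that forces the good/bad dichotomy.

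Second, the step you flag as a ``secondary technical difficulty''---bounding the shadow $W(R)=\sum_{Q\in\mathcal F,\;R\in\mathrm{chain}(Q)} l(Q)^q$ by a power of $l(R)$---is not merely bookkeeping; it fails in general $s$-John domains. The $s$-John condition controls the geometry of a single curve from $x$ to $x_0$ but imposes no bound on the branching downstream of a given Whitney cube $R$: arbitrarily many cubes $Q\in\mathcal F$, of comparable size, can have chains passing through the same $R$ (the paper's own tree-like constructions in Section~4 exhibit exactly this exponential branching). Thus $W(R)$ cannot be bounded in terms of $l(R)$ alone, independently of $E$. This is precisely why the paper's argument replaces the global Fubini by the pigeonhole-plus-Vitali localization, which trades the uncontrollable overlap of chains for the disjointness of a Vitali subfamily. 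Without that device, your Fubini step does not close.
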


\begin{remark}\label{rmk:for previous results}
If $p=n$, $q=1$ and $E\subset\Omega$ is a continuum, then~\eqref{eq:sharp capacity} reduces to the estimate
\begin{equation*}
\capacity_n(E,Q_0,\Omega)\geq C(n,s,\varepsilon) (\diam E)^{(n-1)(s-1)+\varepsilon}.
\end{equation*}
The restriction becomes $1\geq (s-1)(n-1)+\varepsilon$, which is equivalent to $s\leq 1+\frac{1-\varepsilon}{n-1}$. The range for $s$ is essentially sharp, see~\cite{gk13}.

If $q=n$, then~\eqref{eq:sharp capacity} reduces to the estimate 
\begin{equation*}
\capacity_p(E,Q_0,\Omega)\geq C(n,s,\varepsilon) |E|^{\frac{(n-1)s+1-p+\varepsilon}{n}}.
\end{equation*}
The restriction becomes $s\leq 1+\frac{p-\varepsilon}{n-1}$. Note that
$$1+\frac{p}{n-1}> \frac{n}{n-1}+\frac{p-1}{n}.$$ 
This implies that if $s<1+\frac{p}{n-1}$, then $\Omega$ is a $p$-Poincar\'e domain. The range for $s$ is sharp, see~\cite{hk98}.
\end{remark}

The estimate in Theorem~\ref{thm:main theorem} is essentially sharp in the sense that the exponent of $\mathcal{H}_\infty^q(E)$ in~\eqref{eq:sharp capacity} cannot be made strictly smaller than $\frac{s(n-1)+1-p}{q}$; see Example~\ref{example:sharpness} below.

Our second result shows that the requirement $q\geq s(n-1)+1-p+\varepsilon$ is essentially sharp in the sense that there exists an $s$-John domain $\Omega\subset\bR^n$ such that no estimate of the form as in~\eqref{eq:expected} holds in $\Omega$ whenever $q<\min\{s(n-1)+1-p,n\}$. 
This is somewhat surprising since the estimate in~\eqref{eq:sharp capacity} does not degenerate when $q<s(n-1)+1-p$. 

\begin{theorem}\label{example}
Fix $1\leq p\leq n$. There exists an $s$-John domain $\Omega\subset \bR^n$ such that there is a sequence of compact sets $E_j$ in $\Omega$ with the following properties: 
\begin{itemize}
\item Each $E_j$ is disjoint from the  central Whitney cube $Q_0$;
\item $\mathcal{H}_\infty^q(E_j)$ is  bounded from below uniformly by a positive constant and $\capacity_p(E_j,Q_0,\Omega)\to 0$ as $j\to \infty$,  whenever $q<\min\{(n-1)s+1-p,n\}$.
\end{itemize}
\end{theorem}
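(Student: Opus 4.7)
The plan is to construct $\Omega$ as a ``necklace'' $s$-John domain: a fixed central body $R_0$ containing $Q_0$, decorated by a countable family of small balls $B_{j,k}$, $1\leq k\leq N_j$, $j\geq 1$, each attached to $R_0$ by a thin straight cylindrical neck $T_{j,k}$ of the critical $s$-John width. The compact sets will be $E_j=\bigcup_{k=1}^{N_j}B_{j,k}$. The design principle is that the $s$-John condition forces the neck of a ball of radius $\rho$ to have cross-sectional radius at least of order $\rho^s$, so that each decoration carries Hausdorff $q$-content of order $\rho^q$ while contributing a $p$-capacity of order $\rho^{s(n-1)+1-p}$. Pooling $N\sim\rho^{-q}$ such decorations then keeps $\mathcal{H}_\infty^q\asymp 1$ while the total capacity scales like $\rho^{s(n-1)+1-p-q}$, which vanishes precisely when $q<s(n-1)+1-p$.

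Given $q<\min\{s(n-1)+1-p,n\}$, the concrete choices will be $\rho_j=2^{-j}$, $w_j\asymp\rho_j^s$, $\ell_j\asymp\rho_j$, $N_j\asymp\rho_j^{-q}$, with the centres of $\{B_{j,k}\}_k$ placed on a grid of spacing $\asymp\rho_j^{q/n}$ in a neighbourhood of $\partial R_0$. Since $q<n$ this spacing dominates $w_j$, so the necks can be routed pairwise disjointly for large $j$. The $s$-John inequality is then checked along the natural curve from any $x\in B_{j,k}$ through $T_{j,k}$ into $R_0$ and on to the John centre: along the tube portion arc length is at most $\sim\rho_j$ and the distance to $\partial\Omega$ is $\sim w_j=\rho_j^s$, so the inequality is tight exactly inside the neck and satisfied with slack in $B_{j,k}$ and $R_0$. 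Summability of the total decorated volume, controlled by $\sum_j\rho_j^{n-q}+\rho_j^{s(n-1)+1-q}$, allows $\Omega$ to be bounded.

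For the content lower bound, the plan is to apply Frostman's lemma to the measure $\mu_j$ that spreads mass $\rho_j^q$ uniformly on each $B_{j,k}$. Verifying $\mu_j(B(x,r))\lesssim r^q$ splits into three scales: at $r<\rho_j$ the density $\rho_j^{q-n}$ yields $\rho_j^{q-n}r^n\leq r^q$ because $r<\rho_j$ and $q<n$; at $\rho_j\leq r<\rho_j^{q/n}$ at most one ball is caught, so $\mu_j\leq\rho_j^q\leq r^q$; at $r\geq\rho_j^{q/n}$ the number of intercepted balls is $\lesssim(r/\rho_j^{q/n})^n$, giving $\mu_j\lesssim r^n\leq Cr^q$ inside the bounded domain. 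Since $\mu_j(E_j)=N_j\rho_j^q\asymp 1$, Frostman then gives $\mathcal{H}_\infty^q(E_j)\gtrsim 1$ uniformly in $j$. For the capacity upper bound, take $u_{j,k}$ equal to $1$ on $B_{j,k}$, vanishing outside $T_{j,k}\cup B_{j,k}$, and interpolating linearly along the axis of the neck; a direct computation gives $\int|\nabla u_{j,k}|^p\asymp w_j^{n-1}\ell_j^{1-p}\asymp\rho_j^{s(n-1)+1-p}$. The $u_{j,k}$ have pairwise disjoint supports, so $u_j=\sum_k u_{j,k}$ is admissible for $\capacity_p(E_j,Q_0,\Omega)$ and
\[
\capacity_p(E_j,Q_0,\Omega)\leq\int_{\Omega}|\nabla u_j|^p\asymp N_j\rho_j^{s(n-1)+1-p}\asymp\rho_j^{s(n-1)+1-p-q}\xrightarrow[j\to\infty]{}0,
\]
since the hypothesis $q<\min\{s(n-1)+1-p,n\}$ forces $s(n-1)+1-p-q>0$ (directly when $s(n-1)+1-p<n$, or via $s(n-1)+1-p-q>n-q>0$ otherwise).

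The main obstacle is geometric: fitting all levels of decorations disjointly inside a bounded domain while preserving the $s$-John property globally will require a mild smoothing of the neck-to-ball and neck-to-$R_0$ junctions so that the distance to $\partial\Omega$ does not collapse there. The Frostman check at the intermediate scale $\rho_j\leq r\leq\rho_j^{q/n}$ also relies crucially on the explicit grid spacing of the centres, but everything else is bookkeeping around the one sharp computation $w_j^{n-1}\ell_j^{1-p}\asymp\rho_j^{s(n-1)+1-p}$.
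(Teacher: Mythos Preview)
Your plan contains a genuine geometric obstruction in the range $n-1\leq q<n$, which the theorem must cover whenever $(n-1)s+1-p>n-1$. You require the neck length $\ell_j\asymp\rho_j$ (this is forced: with constant width $w_j=\rho_j^s$ the $s$-John condition $w_j\gtrsim(\text{arclength})^s$ fails along any neck longer than $C\rho_j$). Consequently every ball $B_{j,k}$ must sit within distance $O(\rho_j)$ of the fixed $(n-1)$-dimensional surface $\partial R_0$, and a shell of thickness $O(\rho_j)$ around a smooth bounded hypersurface can host at most $C\rho_j^{-(n-1)}$ pairwise disjoint balls of radius $\rho_j$. Thus $N_j\lesssim\rho_j^{-(n-1)}$, which is strictly smaller than the required $N_j\asymp\rho_j^{-q}$ once $q>n-1$. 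Your stated placement ``on an $n$-dimensional grid of spacing $\rho_j^{q/n}$'' is incompatible with the short-neck constraint: either the grid collapses to a single layer (and then has only $\rho_j^{-q(n-1)/n}$ points), or some balls lie at distance $\gg\rho_j$ from $\partial R_0$ and their straight cylindrical necks violate the $s$-John inequality. The Frostman computation at the large scale also presupposes genuinely $n$-dimensional spreading, which is unavailable here for the same reason. The ``mild smoothing'' you anticipate at junctions does not address this counting problem.

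The paper resolves this by abandoning the smooth central body. For $n-1\leq q<n$ it takes $R_0$ to be a John domain whose boundary is a von~Koch--type fractal of Minkowski dimension $q$; such a domain has $\asymp 2^{jq}$ Whitney cubes of diameter $2^{-j}$, and one room-and-$s$-passage is attached inside each of them. For $q<\log_2(2^n-1)$ the paper instead uses a recursive tree: each room of generation $j$ sprouts (up to $2^n-1$) further rooms-and-$s$-passages of generation $j+1$ at its free corners, so the $j$-th level again carries $\asymp 2^{jq}$ rooms without ever needing a large host surface. Your capacity computation $w_j^{n-1}\ell_j^{1-p}\asymp\rho_j^{s(n-1)+1-p}$ and the overall bookkeeping are exactly right; what is missing is that for $q\geq n-1$ the carrier of the decorations must itself be $q$-dimensional in the Whitney-cube sense, and a smooth $R_0$ cannot provide that.

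For $q<n-1$ your necklace idea is essentially sound, though the correct surface spacing is $\rho_j^{q/(n-1)}$ rather than $\rho_j^{q/n}$, and the Frostman check at large scales should use the $(n-1)$-dimensional count $(r/\sigma_j)^{n-1}$, giving $\mu_j(B(x,r))\lesssim r^{n-1}\leq Cr^q$.
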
  

%
%

It would be interesting to know whether one can obtain an estimate of the form as in~\eqref{eq:expected} when $q=(n-1)s+1-p$.

When $q<\min\{(n-1)s+1-p,\log_2(2^n-1)\}$, the $s$-John domain $\Omega$ constructed in Theorem~\ref{example} is in fact Gromov hyperbolic in the quasihyperbolic metric. This is very surprising, since it was proven in~\cite{g13} that for all Gromov hyperbolic $s$-John domains $\Omega$, an estimate of the form as in~\eqref{eq:expected} holds when $p=n$, $q=1$ and $E\subset \Omega$ is a continuum. Our example shows that one can not replace the assumption being a continuum by just being compact, and still obtain the estimate for all $s$-John domains. For definitions and examples of Gromov hyperbolic domains, we refer to the beautiful monograph~\cite{bhk01}.

\section{Preliminary results}

For an increasing function $\tau:[0,\infty)\to[0,\infty)$ with $\tau(0)=0$, we denote by $\mathcal {H}^{\tau}_\infty$ the Hausdorff $\tau$-content: $\mathcal{H}^\tau_\infty(E)=\inf\sum_i\tau(r_i)$, where the infimum is taken over all coverings of $E\subset\bR^n$ with balls $B(x_i,r_i)$, $i=1,2,\dots$ When $\tau(t)=t^s$ for some $0<s<\infty$, we write $\mathcal{H}^s_\infty=\mathcal{H}^\tau_\infty$.

For disjoint compact sets $E$ and $F$ in the domain $\Omega$, we denote by $\capacity_p(E,F,\Omega)$ the $p$-capacity of the pair $(E,F)$:
\begin{equation*}
    \capacity_p(E,F,\Omega)=\inf_u \int_{\Omega}|\nabla u(x)|^pdx,
\end{equation*}
where the infimum is taken over all continuous functions $u\in W_{loc}^{1,p}(\Omega)$ which satisfy $u(x)\leq 0$ for $x\in E$ and $u(x)\geq 1$ for $x\in F$.

Let $\Omega$ be a bounded domain in $\bR^n$, $n\geq 2$. Then $\mathbb{W}=\mathbb{W}(\Omega)$ denotes a Whitney decomposition of $\Omega$, \ie a collection of closed cubes $Q\subset\Omega$ with pairwise disjoint interiors and having edges parallel to the coordinate axes, such that $\Omega=\cup_{Q\in \mathbb{W}}Q$, the diameters of $Q\in \mathbb{W}$ belong to the set $\{2^{-j}:j\in \mathbb{Z}\}$ and satisfy the condition
\begin{equation*}
\diam(Q)\leq \dist(Q,\bdary\Omega)\leq 4\diam(Q).
\end{equation*}
For $j\in\mathbb{Z}$ we define
\begin{equation*}
\mathbb{W}_j=\{Q\in \mathbb{W}:\diam(Q)=2^{-j}\}.
\end{equation*}

The following lemma is well-known, see for instance~\cite[Lemma 2.8]{k12}.
\begin{lemma}\label{lemma:bojarski}
Fix $1\leq p<\infty$. Let $B_1$, $B_2$, $\dots$ be balls or cubes in $\bR^n$, $a_j\geq 0$ and $\lambda>1$. Then
\begin{align*}
\|\sum a_j\chi_{\lambda B_j}\|_p\leq C(\lambda,n,p)\|\sum a_j\chi_{ B_j}\|_p
\end{align*}
\end{lemma}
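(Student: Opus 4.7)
My plan is to use the standard duality-plus-maximal-function route that makes this kind of Bojarski-type inequality so natural. The case $p=1$ is settled by direct computation: expanding the norm and using $|\lambda B_j|=\lambda^n|B_j|$ (with the analogous statement for cubes) gives $\|\sum a_j\chi_{\lambda B_j}\|_1 = \lambda^n\|\sum a_j\chi_{B_j}\|_1$, so only the genuinely nontrivial range $1<p<\infty$ remains.

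For $1<p<\infty$, I would dualize: by the converse of H\"older, it suffices to control $\int \sum_j a_j\chi_{\lambda B_j}(x)\,g(x)\,dx$ uniformly for nonnegative $g$ with $\|g\|_{p'}=1$. Rewriting this as $\sum_j a_j|\lambda B_j|\cdot\dashint_{\lambda B_j}g$, the decisive geometric observation is that if $B_j$ has radius (or side) $r_j$, then for every $y\in B_j$ the dilate $\lambda B_j$ is contained in a concentric ball (resp.\ cube) about $y$ with comparable radius $(\lambda+1)r_j$. Consequently
\begin{equation*}
\dashint_{\lambda B_j}g\;\leq\;C(\lambda,n)\,Mg(y)\qquad\text{for every }y\in B_j,
\end{equation*}
where $M$ denotes the Hardy--Littlewood maximal operator. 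Averaging this inequality over $y\in B_j$ yields $\dashint_{\lambda B_j}g\leq C(\lambda,n)\dashint_{B_j}Mg$.

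Substituting back and using $|\lambda B_j|=\lambda^n|B_j|$ turns the sum into $\int\sum_j a_j\chi_{B_j}(x)\,Mg(x)\,dx$, up to a constant $C(\lambda,n)$. Applying H\"older once more, together with the $L^{p'}$-boundedness of $M$ (which is exactly where $p>1$ is needed), bounds this by $C(\lambda,n,p)\|\sum_j a_j\chi_{B_j}\|_p\|g\|_{p'}$. Taking the supremum over admissible $g$ completes the proof. The only real step is the geometric containment $\lambda B_j\subset B(y,(\lambda+1)r_j)$ that produces the pointwise maximal bound; everything else is standard duality and Hardy--Littlewood theory. I do not anticipate a serious obstacle, only the minor bookkeeping to handle both balls and cubes uniformly (done via comparison of inscribed and circumscribed balls) and to keep the constants in the form $C(\lambda,n,p)$ as stated.
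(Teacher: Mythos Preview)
Your argument is correct and is precisely the standard Bojarski-type proof: the $p=1$ case by direct volume comparison, and for $1<p<\infty$ duality combined with the pointwise bound $\dashint_{\lambda B_j}g\leq C(\lambda,n)Mg(y)$ for $y\in B_j$, followed by the $L^{p'}$-boundedness of the Hardy--Littlewood maximal operator. The bookkeeping for cubes versus balls is indeed routine.

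As for comparison with the paper: there is nothing to compare. The paper does not prove this lemma at all; it simply records it as well known and cites \cite[Lemma 2.8]{k12}. Your proposal therefore supplies a complete proof where the paper gives none, and the route you take is exactly the classical one that the cited reference (and essentially every textbook treatment) follows.
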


\section{Main proofs}

\begin{proof}[Proof of Theorem~\ref{thm:main theorem}]
The proof is a combination of several well-known arguments; in particular~\cite[Proof of Theorem 9]{hak98} and~\cite[Proof of Theorem 5.9]{hk98}. For any compact set $E\subset \Omega$ such that $E\cap Q_0=\emptyset$, where $Q_0$ is the central cube that contains the John center $x_0$, we fix a test function $u$ for $\capacity_p(E,Q_0,\Omega)$, \ie $u$ is a continuous function in $W^{1,p}_{loc}(\Omega)$ so that $u\geq 1$ on  $E$ and $u\leq 0$ on $Q_0$. We may assume that $\diam\Omega=1$.

For each $x\in E$, we may fix an $s$-John curve $\gamma$ joining $x$ to $x_0$ in $\Omega$ and define $P(x)$ to be the collection of Whitney cubes that intersect $\gamma$. Thus $Q(x)\in P(x)$ will be the Whitney cube containing the point $x$. We next divide our compact set $E$ into the good part and the bad part according to the range of $u_{Q}$. 
Let $\mathscr{G}=\{x\in E:u_{Q(x)}\leq \frac{1}{2}\}$ and $\mathscr{B}=E\backslash \mathscr{G}$. 


\textit{Claim 1:} for $1\leq p\leq n$ and $q\geq s(n-1)+1-p+\varepsilon$, there exists a positive constant $C(n,p,q,s,\varepsilon)$ such that
\begin{equation}\label{eq.claim3}
\int_{\Omega} |\nabla u(x)|^pdx\geq C(n,p,q,s,\varepsilon)\Big(\mathcal{H}_\infty^q(\mathscr{B})\Big)^{\frac{s(n-1)+1-p+\varepsilon}{q}}.
\end{equation}
\textit{Proof of Claim 1:} 
Fix $1\leq p\leq n$, $q\geq s(n-1)+1-p+\varepsilon$ and set $\Delta=\frac{\varepsilon}{2}$. 
%
%
%
Let $Q_i, i=1,\dots,m$ be those Whitney cubes that intersect $\mathscr{B}$. 
Fix one such Whitney cube $Q_{i_0}$ and let $x_{i_0}$ be its center.
Let $Q_{i_0}^j,j=1,\dots,k$ be the Whitney cubes in $P(x_{i_0})$ with $Q_{i_0}^k=Q_{i_0}$. The standard chaining argument involving Poincar\'e inequality~\cite{ss90} gives us the estimate
\begin{equation*}
1\lesssim \sum_{j=1}^k \diam Q_{i_0}^j\dashint_{Q_{i_0}^j}|\nabla u(y)|dy.
\end{equation*}
H\"older's inequality implies
\begin{equation*}
1\lesssim \Big(\sum_{j=0}^kr_j^{(1-\kappa)p/(p-1)} \Big)^{(p-1)/p}\Big(\sum_{j=0}^kr_j^{\kappa p-n}\int_{Q_{i_0}^j}|\nabla u|^p \Big)^{1/p},
\end{equation*}
where $r_j=\diam Q_{i_0}^j$ and $\kappa=\frac{s+p-1-\Delta}{sp}$. Using the $s$-John condition, one can easily conclude
\begin{equation*}
\sum_{j=0}^kr_j^{(1-\kappa)p/(p-1)}<C.
\end{equation*}
Therefore,
\begin{equation}\label{eq:hk23}
\sum_{j=0}^kr_j^{\kappa p-n}\int_{Q_{i_0}^j}|\nabla u|^p \geq C,
\end{equation}
where the constant $C$ depends only on $p$, $n$, $\Delta$ and the constant from the $s$-John condition.

By the $s$-John condition $Cr_j\geq |x-y|^s$, for $y\in Q_{i_0}^j$, and since $\kappa p-n<0$ according to our choice $p\leq n$, we obtain
\begin{equation*}
r_j^{\kappa p-n}\lesssim |x-y|^{s(\kappa p-n)}
\end{equation*}
for $y\in Q_{i_0}^j$. For $y\in Q_{i_0}^i\cap (2^{j+1}Q_{i_0}\backslash 2^{j}Q_{i_0})$, we have $|x-y|\approx 2^jr_k$ and hence for such $y$,
\begin{equation}\label{eq:hk24}
r_i^{\kappa p-n}\lesssim (2^jr_k)^{s(\kappa p-n)}.
\end{equation}
Combining~\eqref{eq:hk23} with~\eqref{eq:hk24} leads to
\begin{align*}
1&\lesssim \sum_{j=0}^kr_j^{\kappa p-n}\int_{Q_{i_0}^j}|\nabla u|^p\lesssim (r_k)^{s(\kappa p-n)}\int_{Q_{i_0}}|\nabla u|^p\\
&+\sum_{j=0}^{|\log r_k|}(2^jr_k)^{s(\kappa p-n)}\int_{(2^{j+1}Q_{i_0}\backslash 2^{j}Q_{i_0})\cap\Omega}|\nabla u|^p\\
&\lesssim \sum_{l=0}^{|\log r_k|+1}(2^lr_k)^{s(\kappa p-n)}\int_{2^lQ_{i_0}\cap\Omega}|\nabla u|^p.
\end{align*}
On the other hand,
\begin{equation*}
\sum_{l=0}^{|\log r_k|+1}(2^lr_k)^\Delta <r_k^\Delta\sum_{l=-\infty}^{|\log r_k|+1}2^{l\Delta}<C.
\end{equation*}
Combining the above two estimates, we conclude that there exists an $l$ (depending on $\Delta$ and hence $\varepsilon$) such that
\begin{equation*}
(2^lr_k)^\Delta\lesssim (2^lr_k)^{s(\kappa p-n)}\int_{2^lQ_{i_0}\cap\Omega}|\nabla u|^p.
\end{equation*}
It follows that,
\begin{equation*}
\int_{\Omega\cap 2^lQ_{i_0}}|\nabla u|^p\gtrsim (2^lr_k)^{s(n-\kappa p)+\Delta}=(2^lr_k)^{s(n-1)+1-p+\varepsilon}.
\end{equation*}
In other words, there exists an $R_x\geq d(x,\bdary\Omega)/2$ with
\begin{equation*}
\Big(\int_{\Omega\cap B(x,R_x)}|\nabla u|^p\Big)^{\frac{q}{s(n-1)+1-p+\varepsilon}}\gtrsim R_x^q.
\end{equation*}
Applying the Vitali covering lemma to the covering $\{B(x,R_x)\}_{x\in E}$ of the set $\mathscr{B}$, we can select pairwise disjoint balls $B_1,\dots,B_k,\dots$ such that $\mathscr{B}\subset \bigcup_{i=1}^\infty 5B_i$. Let $r_i$ denote the radius of the ball $B_i$. Then
\begin{align*}
\mathcal{H}^q_\infty(\mathscr{B})&\leq \sum_{i=1}^\infty(\diam 5B_i)^q=5^q\sum_{i=1}^\infty r_i^q\\
&\lesssim \sum_{i=1}^\infty\Big(\int_{\Omega\cap B_i}|\nabla u|^p\Big)^{\frac{q}{s(n-1)+1-p+\varepsilon}}
\end{align*}
The desired capacity estimate follows by noticing the elementary inequality
\begin{equation*}
\sum_i a_i^b \lesssim \Big(\sum_i a_i\Big)^b, \qquad b\geq 1.
\end{equation*}

\textit{Claim 2:} for $n-q<p\leq n$ and  $0<\varepsilon<p+q-n$,
\begin{equation}\label{eq:claim1}
\int_{\Omega} |\nabla u(x)|^pdx\geq C(p,q,n,\varepsilon)\Big(\mathcal{H}_\infty^q(\mathscr{G})\Big)^{\frac{n-p+\varepsilon}{q}}.
\end{equation}

\textit{Proof of Claim 2:} Fix $n-q<p\leq n$ and $0<\varepsilon<p+q-n$. 
Our aim is to show that
\begin{equation}\label{eq:key estimate}
\int_{2Q(x)}|\nabla u(x)|^pdx\geq C(p,s,n)\mathcal{H}_\infty^s(\mathscr{G}\cap Q(x))
\end{equation}
for any $n-p<s\leq n$. We adapt the argument from~\cite[Proof of Theorem 5.9]{hk98}. 

Fix $n-p<s\leq n$. For $y\in \mathscr{G}$, $u_{Q(y)}\leq \frac{1}{2}$. For $x\in \mathscr{G}\cap Q(y)$, write $Q_i=Q(x,r_i)$, where $r_i=2^{-i-1}\diam Q(y)$. Then
\begin{align*}
u(x)=\lim_{i\to \infty}u_{Q_i}=\lim_{i\to\infty}\dashint_{Q_i}u.
\end{align*}
Now 
\begin{align*}
\frac{1}{2}\leq |u(x)-u_{Q_0}|\leq \sum_{i\geq 0}|u_{Q_i}-u_{Q_{i+1}}|.
\end{align*}
Since by the Poincar\'e inequality
\begin{align*}
|u_{Q_i}-u_{Q_{i+1}}|\leq C(n)r_i^{\frac{p+s-n}{p}}\Big(r_i^{-s}\int_{Q_i}|\nabla u|^p\Big)^{\frac{1}{p}},
\end{align*}
we obtain that 
\begin{align*}
\frac{1}{2}&\leq \sum_{i=1}^\infty C(n)r_i^{\frac{p+s-n}{p}}\Big(r_i^{-s}\int_{Q_i}|\nabla u|^p\Big)^{\frac{1}{p}}\\
&\leq C(p,s,n)(\diam Q(y))^{\frac{p+s-n}{p}}\sup_{0<t\leq \diam Q(y)}\Big(t^{-s}\int_{Q(x,t)}|\nabla u|^p\Big)^{\frac{1}{p}}\\
&\leq C(p,s,n)\sup_{0<t\leq \diam Q(y)}\Big(t^{-s}\int_{Q(x,t)}|\nabla u|^p\Big)^{\frac{1}{p}}.
\end{align*}
Thus, for each $x\in \mathscr{G}\cap Q(y)$, there is a cube $Q(x,t_x)$ such that $t_x\leq \diam Q(y)$ and that 
\begin{align*}
t_x^s\leq C(p,s,n)\int_{Q(x,t_x)}|\nabla u|^p.
\end{align*}
By Vitali we can find pairwise disjoint cubes $Q_1$, $Q_2$, $\dots$ as above such that $\mathscr{G}\cap Q(y)\subset \bigcup 5Q_i$. Then
\begin{align*}
\mathcal{H}_\infty^s(\mathscr{G}\cap Q(y))&\leq C(p,s,n)\sum_{i=1}^\infty \int_{Q_i}|\nabla u|^p\\
&\leq C(p,s,n)\int_{2Q(y)}|\nabla u|^p.
\end{align*}
Thus the proof of~\eqref{eq:key estimate} is complete. 

We next show that for $n-q<p\leq n$ and for fixed $0<\varepsilon<p+q-n$, the following estimate holds.
\begin{align}\label{eq:consequence of key est}
\int_{2Q(x)}|\nabla u(x)|^pdx\geq C(p,q,n,\varepsilon)\Big(\mathcal{H}_\infty^q(\mathscr{G}\cap Q(x))\Big)^{\frac{n-p+\varepsilon}{q}}
\end{align}

Let $\varepsilon>0$ be as above. We set $s=n-p+\varepsilon$. Then $s<q$. Now~\eqref{eq:consequence of key est} follows from~\eqref{eq:key estimate} and the trivial estimate 
\begin{align*}
\Big(\mathcal{H}_\infty^q(E)\Big)^{\frac{s}{q}}\lesssim \mathcal{H}_\infty^s(E).
\end{align*}

Taking into account the sub-additivity of Hausdorff $q$-content and concavity of the function $t\mapsto t^{\frac{n-p+\varepsilon}{q}}$, ~\eqref{eq:claim1} follows immediately from~\eqref{eq:consequence of key est} and Lemma~\ref{lemma:bojarski}.


\end{proof}

\section{Examples}

\begin{example}\label{example:sharpness}
We will use the standard ``rooms and corridors" type domains. This type of domains consists of a central cube shaped room  along with an infinite disjoint collection of  cube shaped rooms which are connected to the central room by narrow cylindrical corridors; see Figure~\ref{fig:room and corridor}. 

For each $j\in \mathbb{N}$, the attached cube shaped room $E_j$ is of edge length $r_j$ and the narrow cylindrical corridor is of radius $r_j^s$ and height $r_j$. We can ensure that the rooms and corridors are pairwise disjoint by requiring the sequence $\{r_j\}_{j\in \mathbb{N}}$ to decrease to zero sufficiently rapidly. It is clear that $\Omega$ is an $s$-John domain.

\begin{figure}[h]
  \includegraphics[width=8cm,height=10cm]{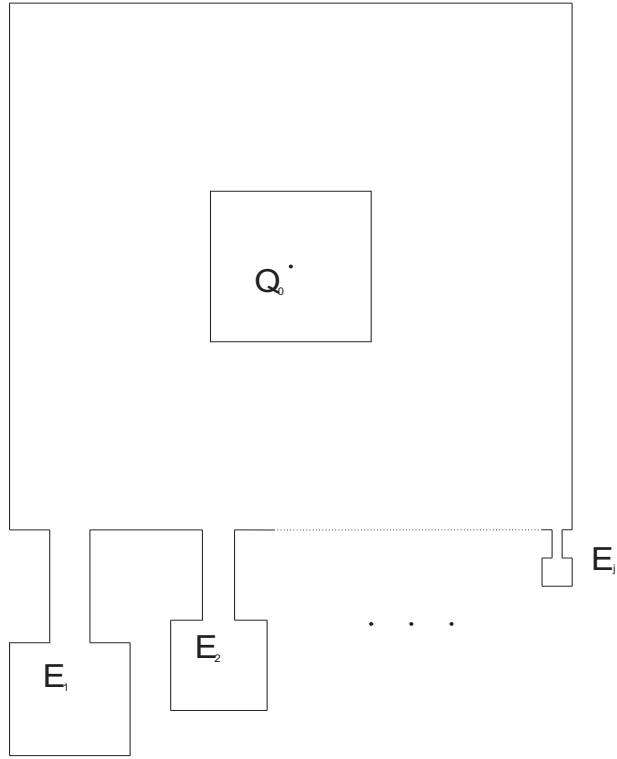}\\
  \caption{The standard ``room and corridors" type domain}\label{fig:room and corridor}
\end{figure}

For $s<\frac{p+q-1}{n-1}$, 
we may choose $\varepsilon>0$ such that $q\geq s(n-1)+1-p+\varepsilon$. Then it is easy to obtain the following estimate:
\begin{align*}
\capacity_p(E_j,Q_0,\Omega)\leq Cr_j^{(n-1)s-p+1}\leq C\mathcal{H}_\infty^q(E_j)^{\frac{(n-1)s-p+1}{q}}
\end{align*}
Noticing that $r_j\to 0$ as $j\to \infty$, this implies that the exponent of $\mathcal{H}_\infty^q(E)$ in Theorem~\ref{thm:main theorem} is essentially best possible. 

\end{example}

\begin{example}\label{example:q big}
Fix  $p\in [1,n]$, $n\geq 2$. There exists an $s$-John domain $\Omega$ in $\bR^n$ such that there is a sequence of compact sets $E_j$ in $\Omega$  with the following two properties: 
\begin{itemize}
\item Each $E_j$ is disjoint from the central Whitney cube $Q_0$;
\item $\mathcal{H}_\infty^q(E_j)$ is  bounded from below uniformly by a positive constant and $\capacity_p(E_j,Q_0,\Omega)\to 0$ as $j\to \infty$, whenever $n-1\leq q<\min\{(n-1)s+1-p,n\}$. 
\end{itemize}

The idea of the construction of such an $s$-John domain is the following: we first construct a John domain $\Omega_0$ such that the number $N_j$ of Whitney cubes of size (comparable to) $r_j=2^{-j}$ in $\Omega_0$ is approximately $2^{qj}$. We then build a ``room and $s$-passage" $Q_s$ in each Whitney cube $Q\subset\Omega_0$ and $Q\neq Q_0$, where $Q_0$ is the central Whitney cube $Q_0$ containing the John center; see Figure~\ref{fig:room and s-passage}. If the Whitney cube $Q$ is of edge length $4r_j$, then the attached room shaped cube is of side length $r_j$ and the corresponding $s$-passage is of radius $r_j^s$ and height $r_j$.

\begin{figure}[h]
  \includegraphics[width=10cm]{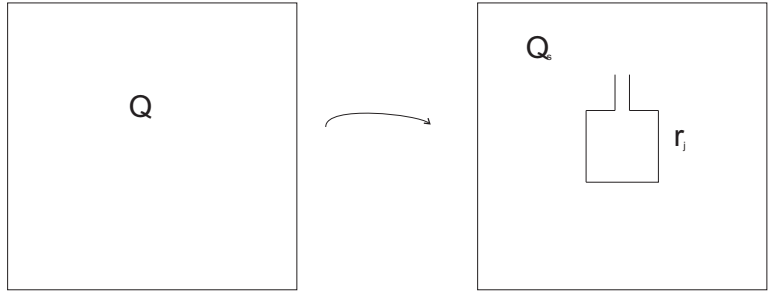}\\
  \caption{``room and $s$-passage" type replacement}\label{fig:room and s-passage}
\end{figure}

Let $E_j$ be the union of all the room shaped cube of edge length $r_j$. Then we have the following trivial upper estimate
\begin{align*}
\capacity_p(E_j,Q_0,\Omega)\leq CN_j\cdot r_j^{(n-1)s-p+1}\leq Cr_j^{(n-1)s-p-q+1}.
\end{align*}
Thus $\capacity_p(E_j,Q_0,\Omega) \to 0$ whenever $q<(n-1)s-p+1$. On the other hand, noting that all the cubes in $E_j$ are well separated, to estimate the Hausdorff $q$-content, one has to cover each such cube by a ball of the same size (since otherwise the ball will intersects two cubes and substantially increases the radius). Thus we have
\begin{align*}
\mathcal{H}_\infty^q(E_j)\geq CN_j\cdot r_j^q\geq C.
\end{align*}

To construct a John domain with the desired property, one essentially needs to construct a John domain $\Omega_0$ such that $\dim_{\mathcal{M}}(\bdary\Omega_0)=q$ when $q\in [n-1,n)$, where $\dim_{\mathcal{M}}$ denotes the upper Minkowski dimension. With this understood, one can select certain Von Koch type curve as the boundary of a John domain; see~\cite[Proposition 5.2]{hhv13} for the detailed construction of such a John domain $\Omega_0$. It is clearly that the ``room and $s$-passage" type replacement described above turns $\Omega_0$ into an $s$-John domain $\Omega$. In fact, $\dim_{\mathcal{M}}(\bdary\Omega_0)=\dim_{\mathcal{M}}(\bdary\Omega)=q$. For these facts, see~\cite[Proposition 5.11 and Proposition 5.16]{hhv13}. 

\end{example}

\begin{example}\label{example:q small}
Fix $1\leq p\leq n$. There exists an $s$-John domain, which is Gromov hyperbolic in the quasihyperbolic metric, such that there is a sequence of compact sets $E_j$ in $\Omega$ with the follow properties: 
\begin{itemize}
\item Each $E_j$ is disjoint from the  central Whitney cube $Q_0$;
\item $\mathcal{H}_\infty^q(E_j)$ is  bounded from below uniformly by a positive constant and $\capacity_p(E_j,Q_0,\Omega)\to 0$ as $j\to \infty$,  whenever $q<\min\{(n-1)s+1-p,\log_2(2^n-1)\}$.
\end{itemize}

We first give a detailed construction of the $s$-John domain $\Omega$ in the plane with the desired properties. Fix $1\leq p\leq 2$. We first consider the case $q=\log_23$. The $s$-John domain $\Omega$ will be constructed by an inductive process. In the first step, we have a unit cube $Q$ and four ``room and $s$-passage" type ``legs" as in Figure~\ref{fig:room and s-passage}. The ``$s$-passage" $R_1$ is a rectangle of length $2^{-1}$ and width $2^{-s-1}$ and the ``room" $Q_1$ is a cube of edge-length $2^{-1}$. In the second step, we attach at each of the three corners of $Q_1$ a ``room and $s$-passage" type ``legs". The ``$s$-passage" $R_2$ is a rectangle of length $2^{-2}$ and width $2^{-2s-1}$ and the ``room" $Q_2$ is a cube of edge-length $2^{-2}$. In general at step $j$, we have $4\cdot 3^{j-1}$ ``room and $s$-passage" type ``legs", where the ``$s$-passage" $R_j$ is a rectangle of length $2^{-j}$ and width $2^{-js-1}$ and the ``room" $Q_j$ is a cube of edge-length $2^{-j}$. It is easy to check that, with our choices of parameters, there is no overlap in our construction. Moreover, $\Omega$ is an $s$-John domain that is Gromov hyperbolic in the quasihyperbolic metric (since $\Omega$ is simply connected). 

\begin{figure}[h]
  \includegraphics[width=9cm]{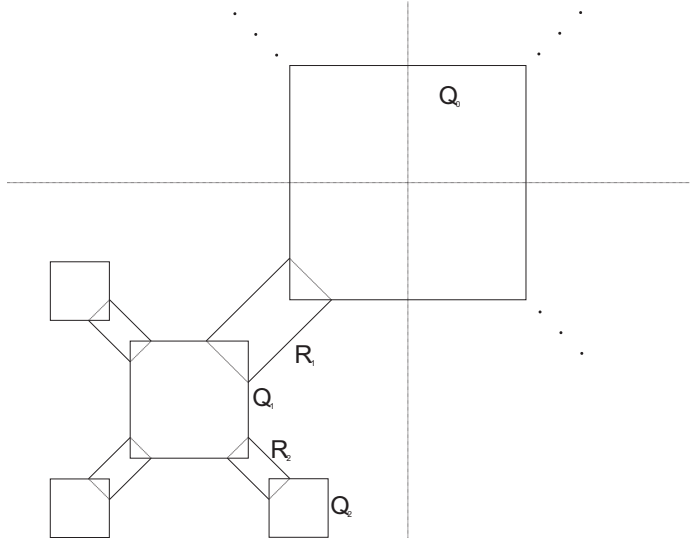}\\
  \caption{The $s$-John domain $\Omega\subset\bR^2$}\label{fig:room and s-passage}
\end{figure}

We choose $E_j$ to be the union of all the cubes at step $j$, \ie the collection of $4\cdot 3^{j-1}$ (disjoint) cubes of edge-length $2^{-j}$. Noting that all the cubes at step $j$ are well separated, to estimate the Hausdorff $q$-content, one has to cover each such cube by a ball of the same size (since otherwise the ball will intersects two cubes and substantially increases the radius). Note also that $q=\log_23$ and so it follows that
\begin{align*}
\mathcal{H}^q_\infty(E_j)\geq C4\cdot 3^{j-1}\cdot 2^{-qj}= C.
\end{align*}  
On the other hand,
\begin{align*}
\capacity_p(E_j,Q_0,\Omega)\leq C4\cdot 3^{j-1}\cdot 2^{-j(s-p+1)}\leq C2^{-j(s-p-q+1)}.
\end{align*}
If $q<s-p+1$, then $\capacity_p(E_j,Q_0,\Omega)\to 0$ as $j\to \infty$ as desired.

Next we consider the case $q<\min\{s-p+1,\log_23\}$. This case is easier and we only need to delete some  ``room and $s$-passage" type ``legs" from the previous construction. To be more precise, we choose $k_j\in \mathbb{N}$ to be an integer such that $k_j-1\leq 2^{qj}\leq k_j$. The construction of the desired $s$-John domain can be proceeded in a similar way. In the first step, we have a unit cube $Q$ and $k_1$ ``room and $s$-passage" type ``legs" as in the previous construction. The ``$s$-passage" $R_1$ is a rectangle of length $2^{-1}$ and width $2^{-s-1}$ and the ``room" $Q_1$ is a cube of edge-length $2^{-1}$. In the second step, we fix $k_2$ corners of all the cubes of edge-length $2^{-1}$ in step 1, and attach at each corner a ``room and $s$-passage" type ``legs". The ``$s$-passage" $R_2$ is a rectangle of length $2^{-2}$ and width $2^{-2s-1}$ and the ``room" $Q_2$ is a cube of edge-length $2^{-2}$. In general at step $j$, we have $k_j$ ``room and $s$-passage" type ``legs", where the ``$s$-passage" $R_j$ is a rectangle of length $2^{-j}$ and width $2^{-js-1}$ and the ``room" $Q_j$ is a cube of edge-length $2^{-j}$.  

Let $E_j$ be the union of all the cubes at step $j$, \ie the collection of $k_j$ (disjoint) cubes of edge-length $2^{-j}$. It is clear that  
\begin{align*}
\mathcal{H}^q_\infty(E_j)\geq Ck_j\cdot 2^{-qj}\geq C.
\end{align*}  
On the other hand, we have
\begin{align*}
\capacity_p(E_j,Q_0,\Omega)\leq Ck_j\cdot 2^{-j(s-p+1)}\leq C2^{-j(s-p-q+1)}.
\end{align*}
If $q<s-p+1$, then $\capacity_p(E_j,Q_0,\Omega)\to 0$ as $j\to \infty$ as desired.

We can construct similar examples in $\bR^n$, $n\geq 3$. Fix $1\leq p\leq n$. Consider the difficult case $q=\log_2(2^n-1)$. The $s$-John domain $\Omega$ will be constructed in a similar manner as before. In the first step, we have a unit cube $Q$ and $2^{n}$ ``room and $s$-passage" type ``legs". The ``$s$-passage" $R_1$ is a cylinder of height $2^{-1}$ and radius $2^{-s-1}$ and the ``room" $Q_1$ is a cube of edge-length $2^{-1}$. In the second step, we attach at each of the $2^n-1$ corners of $Q_1$ a ``room and $s$-passage" type ``legs". The ``$s$-passage" $R_2$ is a cylinder of height $2^{-2}$ and radius $2^{-2s-1}$ and the ``room" $Q_2$ is a cube of edge-length $2^{-2}$. In general at step $j$, we have $2^n\cdot (2^{n}-1)^{j-1}$ ``room and $s$-passage" type ``legs", where the ``$s$-passage" $R_j$ is a cylinder of height $2^{-j}$ and radius $2^{-js-1}$ and the ``room" $Q_j$ is a cube of edge-length $2^{-j}$. It is easy to check that, with our choices of parameters, there is no overlap in our construction. Moreover, $\Omega$ is an $s$-John domain that is Gromov hyperbolic in the quasihyperbolic metric. Indeed, one can easily verify that every quasihyperbolic geodesic triangle in $\Omega$ is $\delta$-thin for some $\delta<\infty$.

We choose $E_j$ to be the union of all the cubes at step $j$, \ie the collection of $2^n\cdot (2^n-1)^{j-1}$ (disjoint) cubes of edge-length $2^{-j}$. Note that $q=\log_2(2^n-1)$ and we obtain that  
\begin{align*}
\mathcal{H}^q_\infty(E_j)\geq C2^n\cdot (2^n-1)^{j-1}\cdot 2^{-qj}= C.
\end{align*}  
On the other hand,
\begin{align*}
\capacity_p(E_j,Q_0,\Omega)&\leq C2^n\cdot (2^n-1)^{j-1}\cdot 2^{-j[(n-1)s-p+1]}\\
&\leq C2^{-j[(n-1)s-p-q+1]}.
\end{align*}
If $q<(n-1)s-p+1$, then $\capacity_p(E_j,Q_0,\Omega)\to 0$ as $j\to \infty$ as desired.

The case $q<\log_2(2^n-1)$ can be proceeded as in the planar case by deleting the extra number of ``room and $s$-passage" type ``legs" and we leave the simple verification to the interested readers. 
\end{example}

\textbf{Acknowledgements}

The author would like to thank his supervisor Academy Professor Pekka Koskela for posing this question and for helpful discussions.

\end{document}